\numberwithin{equation}{section}
\theoremstyle{definition}
\theoremstyle{plain}
\newtheorem{thm}{Theorem}[section]
\newtheorem{pro}{Proposition}[section]
\newtheorem{lmm}{Lemma}[section]
\theoremstyle{definition}
\newtheorem{rem}{Remark}[section]
\newcommand{\R}{\mathbb{R}}
\newcommand{\E}{\mathbb{E}}
\renewcommand{\P}{\mathbb{P}}
\newcommand{\F}{\mathcal{F}}
\newcommand{\K}{\mathfrak{K}}
\newcommand{\B}{\mathrm{B}}
\newcommand{\la}{\langle}
\newcommand{\ra}{\rangle}
\newcommand{\1}{\mathbf{1}}
\renewcommand{\d}{\mathrm{d}}
\begin{document}
\title[Prediction law of fractional Brownian motion]
{Prediction law of fractional Brownian motion}

\date{\today}

\author[Sottinen]{Tommi Sottinen}
\address{Department of Mathematics and Statistics, University of Vaasa, P.O. Box 700, FIN-65101 Vaasa, FINLAND}
\email{tommi.sottinen@iki.fi}

\author[Viitasaari]{Lauri Viitasaari}
\address{Department of Mathematics and System Analysis, Aalto University School of Science, Helsinki, P.O. Box 11100, FIN-00076 Aalto,  FINLAND} 
\email{lauri.viitasaari@aalto.fi}

\begin{abstract}
We calculate the regular conditional future law of the fractional Brownian motion with index $H\in(0,1)$ conditioned on its past.  We show that the conditional law is continuous with respect to the conditioning path.  
We investigate the path properties of the conditional process and the asymptotic behavior of the conditional covariance.
\end{abstract}

\thanks{T. Sottinen was partially funded by the Finnish Cultural Foundation (National Foundations' Professor Pool).
\\ \indent
L.Viitasaari was partially funded by the Emil Aaltonen Foundation.}

\keywords{Fractional Brownian motion; prediction; regular conditional law}

\subjclass[2010]{60G22; 60G25}

\maketitle

\section{Introduction}

Let $B^H=(B^H_t)_{t\in\R_+}$ be the fractional Brownian motion with Hurst index $H\in(0,1)$. Let $u\in\R_+$, and let $\F_u$ be the $\sigma$-field generated by the fractional Brownian motion on the interval $[0,u]$.  We study the prediction of $(B_t)_{t\ge u}$ given the information $\F_u$.  In other words, we study the conditional regular law of $\hat B^{H}(u) = B^H|\F_u$. It is well-known that such regular conditional laws for Gaussian processes exists and they are Gaussian with random conditional mean and deterministic conditional variance, see, e.g., Bogachev \cite[Section 3.10]{Bogachev-1998} or Janson \cite[Chapter 9]{Janson-1997}. Recently, LaGatta \cite{LaGatta-2013} introduced the notion of continuous disintegration. In our case it reads as follows: Let $T>0$ be arbitrary and let $\P_T$ be the law of the fractional Brownian motion on $[0,T]$. The regular conditional law $\P^y_T = \P_T[\,\cdot\,| B_v^H = y(v), v\le u]$ is continuous with respect to $y$ if $y_n\to y$ (in sup-norm) implies $\P_T^{y_n}\to \P_T^y$ (weakly).  
In practice however, the conditioning trajectory is usually observed only at discrete points, and the continuity of regular conditional law in the sense of LaGatta has practical importance. 
We calculate the regular conditional law of fractional Brownian motion explicitly and show that it is continuous with respect to the conditioning trajectory.

Prediction of fractional Brownian motion has long history, see Molchan \cite{Molchan-2003}.  For example, Gripenberg and Norros \cite{Gripenberg-Norros-1996} provided the conditional mean of the fractional Brownian motion with parameter $H>\frac{1}{2}$ based on observations extending to the infinite past, Norros et al. \cite{Norros-Valkeila-Virtamo-1999} provided the conditional mean for the whole range $H\in(0,1)$ based on the observations on a compact interval, Anh and Inoue \cite{Anh-Inoue-2004} provided the conditional mean for $H<\frac12$, Duncan \cite{Duncan-2006} studied conditional mean for some processes related to integrals of fractional Brownian motion, and Fink et al. \cite{Fink-Kluppelberg-Zahle-2013} derived conditional characteristic function and the conditional variance. However, it seems that the regular conditional law itself has not been studied.

\section{Preliminaries}

We recall some facts for fractional Brownian motion and fractional calculus. As general references for fractional Brownian motion we refer to Biagini et al. \cite{Biagini-Hu-Oksendal-Zhang-2008} and  Mishura \cite{Mishura-2008}.  The standard reference for fractional calculus is Samko et al.  \cite{Samko-Kilbas-Marichev-1993}.  

The fractional Brownian motion $B^H=(B^H_t)_{t\in\R_+}$ with Hurst index $H\in(0,1)$ is the centered Gaussian process with covariance
$$
r_H(t,s) = \frac{1}{2}\left[t^{2H}+s^{2H}- |t-s|^{2H}\right].
$$
The case $H>\frac{1}{2}$ corresponds to the long-range dependent case, or positively correlated increments.  The case $H<\frac{1}{2}$ corresponds to the short-range dependent case, or negatively correlated increments.  For $H=\frac{1}{2}$, we have the classical Brownian motion.

Let
$$
I_{t-}^\alpha[f] (s) :=
\frac{1}{\Gamma(\alpha)}\int_s^t f(z)(z-s)^{\alpha-1}\, \d z, \quad s\in(0,t),
$$
be the right-sided fractional integral of order $\alpha\in(0,1)$. The inverse of $I_{t-}^\alpha$ is the right-sided fractional derivative
$$
I_{t-}^{-\alpha}[f](s) :=
-\frac{1}{\Gamma(1-\alpha)}\frac{\d}{\d s}\int_s^t f(z)(z-s)^{-\alpha}\, \d z.
$$

For $u\in\R_+$, define
$$
\K_{H,u}[f](t) := \sigma_H t^{H-\frac{1}{2}}I_{u-}^{H-\frac{1}{2}}\left[(\cdot)^{H-\frac{1}{2}} f\right](t),
$$
where
$$
\sigma_H
= \sqrt{\frac{\pi(H-\frac{1}{2})2H}{\Gamma(2-2H)\sin(\pi(H-\frac{1}{2}))}}.
$$
Let $\K_{H,u}^{-1}$ be the inverse of $\K_{H,u}$, i.e.,
$$
\K_{H,u}^{-1}[f](t) = \frac{1}{\sigma_H} t^{\frac{1}{2}-H} I_{u-}^{\frac{1}{2} -H}\left[(\cdot)^{H-\frac{1}{2}} f\right](t).
$$
Indicator functions $\1_{[0,t)}$ belong to the domains of $\K_{H,t}$ and $\K_{H,t}^{-1}$ for all $t>0$ and $H\in(0,1)$. So, we can define
\begin{eqnarray*}
k_H(t,s) &:=& \K_{H,t}\left[\1_{[0,t)}\right](s), \\
k^{-1}_H(t,s) &:=& \K_{H,t}^{-1}\left[\1_{[0,t)}\right](s).
\end{eqnarray*}
Then
\begin{equation}
\label{eq:kernel}
k_H(t,s) = d_H\left[\left(\frac{t}{s}\right)^{H-\frac{1}{2}}(t-s)^{H-\frac12}
-(H-\frac{1}{2})s^{\frac{1}{2}-H}\int_s^t z^{H-\frac{3}{2}}(z-s)^{H-\frac{1}{2}}\, \d z\right],  
\end{equation}
where
$$
d_H = \sqrt{\frac{2H\Gamma(\frac{3}{2}-H)}{\Gamma(H+\frac{1}{2})\Gamma(2-2H)}}.
$$
(A similar formula can be found for $k_H^{-1}$ also, but we have no need for it here.)

\begin{lmm}[Volterra Correspondence]\label{lmm:vc}
Let $B^H$ be a fractional Brownian motion. Then the process
$$
W_t = \int_0^t k_H^{-1}(t,s)\, \d B^H_s
$$
is a Brownian motion.  Moreover, the fractional Brownian motion can be recovered from it by 
$$
B^H_t = \int_0^t k_H(t,s)\, \d W_s.
$$
\end{lmm}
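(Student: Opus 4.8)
The plan is to reduce both assertions to a single covariance identity together with the mutual invertibility of the operators $\K_{H,t}$ and $\K_{H,t}^{-1}$. The governing identity is
\begin{equation}
\int_0^{t\wedge s} k_H(t,r)\, k_H(s,r)\, \d r = r_H(t,s), \qquad s,t\ge 0. \tag{$\star$}
\end{equation}
Granting $(\star)$, the process $\int_0^\cdot k_H(\cdot,r)\,\d W_r$ is a centered Gaussian process (a Wiener integral of a deterministic kernel against a Brownian motion) whose covariance is exactly $r_H$, hence it is a fractional Brownian motion; since a centered Gaussian law is determined by its covariance, this yields the representation $B^H_t=\int_0^t k_H(t,r)\,\d W_r$. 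Thus the first task is to prove $(\star)$, and the second is to identify the correct $W$ and to show it is recovered through $\K_{H,t}^{-1}$.

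First I would prove $(\star)$ by fractional calculus. Read through the definition $k_H(t,\cdot)=\K_{H,t}[\1_{[0,t)}]$, the identity says precisely that $\K_{H,t}$ is the isometry (the transfer operator) sending the fractional-Brownian integrand $\1_{[0,t)}$ into $L^2$, with $\la\K_{H,t}[\1_{[0,t)}],\K_{H,s}[\1_{[0,s)}]\ra_{L^2}=r_H(t,s)$. Because $\K_{H,u}$ is, up to the power weights $(\cdot)^{H-\frac12}$ and the constant $\sigma_H$, the right-sided Riemann--Liouville operator $I_{u-}^{H-\frac12}$ (a genuine integral for $H>\frac12$ and a derivative of order $\frac12-H$ for $H<\frac12$), the computation rests on two standard facts from Samko et al.: the adjointness $\int (I_{0+}^\alpha\phi)\psi=\int\phi\,(I_{T-}^\alpha\psi)$ of left- and right-sided fractional integrals, and the semigroup law $I^\alpha I^\beta=I^{\alpha+\beta}$. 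Applying these (with $s\le t$, so that on $[0,s]$ the two kernels may be compared) collapses the double integral to a Beta-type integral, and the constants $\sigma_H$ and $d_H$ are chosen exactly so that it equals $r_H(t,s)$.

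For the inverse direction I would exploit the causal consistency of the family $\{\K_{H,u}\}_u$: since $I_{u-}^\alpha[g](r)=\frac{1}{\Gamma(\alpha)}\int_r^u g(z)(z-r)^{\alpha-1}\,\d z$ ignores values of $g$ beyond its support, a function supported on $[0,t)$ is mapped identically by $\K_{H,u}$ and $\K_{H,t}$ for every $u\ge t$, while the image again vanishes on $[t,u)$. Hence, using that $\K_{H,t}^{-1}$ inverts $\K_{H,t}$, one gets $\K_{H,T}[k_H^{-1}(t,\cdot)]=\K_{H,T}[\K_{H,t}^{-1}[\1_{[0,t)}]]=\1_{[0,t)}$ for $t\le T$. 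Defining $W_t=\int_0^t k_H^{-1}(t,r)\,\d B^H_r$ and invoking the fractional-Brownian Wiener isometry $\E[\int f\,\d B^H\int g\,\d B^H]=\la\K_{H,T}[f],\K_{H,T}[g]\ra_{L^2}$ (itself a restatement of $(\star)$ extended from indicators by linearity and density), I would compute
\begin{equation*}
\E[W_tW_s]=\la\K_{H,T}[k_H^{-1}(t,\cdot)],\K_{H,T}[k_H^{-1}(s,\cdot)]\ra_{L^2}=\la\1_{[0,t)},\1_{[0,s)}\ra_{L^2}=t\wedge s,
\end{equation*}
so the centered Gaussian process $W$ is a Brownian motion. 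The representation $B^H_t=\int_0^t k_H(t,r)\,\d W_r$ then follows from the same two operator facts: both $B^H_t$ and $\int_0^t k_H(t,r)\,\d W_r$ lie in the Gaussian space of $W$ (which coincides with that of $B^H$ because $\K_{H,T}$ is a bijection with inverse $\K_{H,T}^{-1}$) and have identical covariances with every $W_s$, forcing them to be almost surely equal.

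I expect the main obstacle to be the fractional-calculus verification of $(\star)$, together with the bookkeeping needed to make the operator manipulations rigorous: one must check the integrability of $k_H(t,\cdot)$ and $k_H^{-1}(t,\cdot)$ near the two singular endpoints $r\downarrow 0$ and $r\uparrow t$ (the exponents, and hence the correct function spaces, differ in the regimes $H>\frac12$ and $H<\frac12$), justify the interchange of integrals underlying the adjointness step, and confirm that $\K_{H,t}^{-1}$ is genuinely a two-sided inverse of $\K_{H,t}$ on the relevant domain rather than merely a formal one.
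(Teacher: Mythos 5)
The paper offers no proof of this lemma at all: it is quoted as the classical Molchan--Golosov/Volterra representation, with the analytic details delegated to the cited literature (Pipiras and Taqqu). So your sketch can only be measured against that standard route, and it is indeed the route you take: the covariance identity $(\star)$, to be proved by fractional integration by parts and the semigroup property of $I^{\alpha}$, is exactly the classical crux, and your observation that the operators $\K_{H,u}$ act consistently on functions supported in $[0,t)$ for all $u\ge t$ is the correct device for comparing kernels across different horizons. Deferring the actual verification of $(\star)$ is acceptable in a plan, and you flag it yourself as the main obstacle.

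There is, however, a genuine gap in the inverse direction, and it is not among the bookkeeping items you list. The isometry $\E[\int f\,\d B^H\int g\,\d B^H]=\la\K_{H,T}[f],\K_{H,T}[g]\ra_{L^2}$ is obtained from $(\star)$ ``by linearity and density'', but the density needed to apply it to $f=k_H^{-1}(t,\cdot)$ is \emph{not} the trivial density of step functions in $L^2$: since the abstract Wiener integral with respect to $B^H$ is defined only on limits of step functions in the norm $\|\K_{H,T}[\,\cdot\,]\|_{L^2}$, you need $\1_{[0,t)}$ to lie in the $L^2$-closure of $\mathrm{span}\{k_H(s,\cdot):s\le T\}$ before $W_t$ is even well defined. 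The same missing fact reappears when you assert that the Gaussian space of $W$ ``coincides with that of $B^H$ because $\K_{H,T}$ is a bijection'': the inclusion $\mathcal{H}_W\subseteq\mathcal{H}_{B^H}$ is immediate, but the reverse inclusion is equivalent to the recovery formula you are trying to prove, and it does not follow from the pointwise operator identity $\K_{H,T}^{-1}\K_{H,T}=\mathrm{id}$ alone; without it, the final ``equal covariances against every $W_s$'' argument only shows that $\int_0^t k_H(t,r)\,\d W_r$ is the orthogonal projection of $B^H_t$ onto $\mathcal{H}_W$. The repair is standard and short once stated: define $J\bigl(\sum a_i k_H(s_i,\cdot)\bigr):=\sum a_i B^H_{s_i}$, which is well defined and isometric by $(\star)$ alone; prove that $\mathrm{span}\{k_H(s,\cdot):s\le T\}$ is dense in $L^2([0,T])$ --- for $H>\frac12$, if $g$ is orthogonal to every $k_H(s,\cdot)$, fractional integration by parts gives $\int_0^s r^{H-\frac12}I_{0+}^{H-\frac12}\bigl[(\cdot)^{H-\frac12}g\bigr](r)\,\d r=0$ for all $s\le T$, whence $g=0$ by injectivity of the fractional integral, with a separate but analogous argument for $H<\frac12$ --- and then extend $J$ to an isometry of $L^2([0,T])$ onto $\mathcal{H}_{B^H}$ and set $W_s:=J(\1_{[0,s)})$. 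Both claims then follow in two lines: $\E[W_tW_s]=\la\1_{[0,t)},\1_{[0,s)}\ra_{L^2}=t\wedge s$, and $\int_0^t k_H(t,r)\,\d W_r=J(k_H(t,\cdot))=B^H_t$. This makes precise where the bijectivity of $\K_{H,T}$ actually enters, namely through the density statement, rather than as a slogan.
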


The Volterra correspondence of Lemma \ref{lmm:vc} above extends to a transfer principle of Lemma \ref{lmm:tp} below. We note that in Lemma \ref{lmm:vc} can be taken as the definition of an abstract Wiener integral with respect to the fractional Brownian motion.  We refer to Pipiras and Taqqu \cite{Pipiras-Taqqu-2001} for details on Wiener integration with respect to fractional Brownian motions, and for \cite{Sottinen-Yazigi-2014} for a more general discussion on abstract Wiener integration.

\begin{lmm}[Transfer Principle]\label{lmm:tp}
Let $B^H$ and $W$ be as in Lemma \ref{lmm:vc}. Then, for all $u\in\R_+$,
$$
\int_0^u f(t)\, \d W_t
= \int_0^u \K_{H,u}^{-1}[f](t)\, \d B^H_t
$$
for all $f\in L^2([0,u])$, and
$$
\int_0^u f(t)\, \d B^H_t
= \int_0^u \K_{H,u}[f](t)\, \d W_t,
$$
for all $f\in \K_{H,u}^{-1} L^2([0,u])$.
\end{lmm}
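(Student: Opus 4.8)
The plan is to verify the first identity on indicators $\1_{[0,t)}$, extend it to all of $L^2([0,u])$ by linearity together with an isometry argument, and then obtain the second identity by substituting $f=\K_{H,u}[g]$.

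The starting point is the observation that the right-sided fractional operator is causal: for $t\le u$ the indicator $\1_{[0,t)}$ truncates the defining integral of $\K_{H,u}^{-1}$ at the level $t$, so the upper endpoint $u$ drops out. Concretely, I would show
$$
\K_{H,u}^{-1}[\1_{[0,t)}](s) = \K_{H,t}^{-1}[\1_{[0,t)}](s) = k_H^{-1}(t,s), \qquad 0\le s\le u,\ t\le u,
$$
where the middle equality is the definition of $k_H^{-1}(t,\cdot)$ and both sides vanish for $s\ge t$. When $H<\frac12$ the operator $I_{u-}^{\frac12-H}$ is a genuine fractional integral and the claim is immediate, since $\int_s^u(\cdots)\1_{[0,t)}(z)\,\d z=\int_s^t(\cdots)\,\d z$ for $s<t$ and the integrand vanishes for $s\ge t$. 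When $H>\frac12$ one has a fractional derivative, and the same truncation holds because the inner integral is independent of $u$ on $\{s<t\}$ and identically zero on $\{s>t\}$, so its $s$-derivative cannot see $u$. Granting this, the Volterra correspondence of Lemma \ref{lmm:vc} gives, for $t\le u$,
$$
\int_0^u \1_{[0,t)}(s)\,\d W_s = W_t = \int_0^t k_H^{-1}(t,s)\,\d B^H_s = \int_0^u \K_{H,u}^{-1}[\1_{[0,t)}](s)\,\d B^H_s,
$$
which is the first identity for $f=\1_{[0,t)}$. Linearity of both the integral and of $\K_{H,u}^{-1}$ then extends it to all step functions.

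To pass to general $f\in L^2([0,u])$ I would use that step functions are dense in $L^2([0,u])$, together with the Wiener isometries. The map $f\mapsto\int_0^u f\,\d W_t$ is an isometry of $L^2([0,u])$ into $L^2(\Omega)$; since it agrees on step functions with $f\mapsto\int_0^u\K_{H,u}^{-1}[f]\,\d B^H_t$, the two $L^2(\Omega)$-norms coincide on step functions, and in particular $\K_{H,u}^{-1}[f]$ lies in the domain of the abstract $B^H$-Wiener integral of Lemma \ref{lmm:vc} with $\E[(\int_0^u\K_{H,u}^{-1}[f]\,\d B^H)^2]=\|f\|_{L^2([0,u])}^2$. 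Taking $L^2(\Omega)$-limits along step functions $f_n\to f$ identifies the two integrals, which proves the first identity in general. The second identity is then a reformulation: for $g\in\K_{H,u}^{-1}L^2([0,u])$ set $f:=\K_{H,u}[g]\in L^2([0,u])$, so that $g=\K_{H,u}^{-1}[f]$ since $\K_{H,u}^{-1}$ inverts $\K_{H,u}$; substituting into the first identity yields $\int_0^u\K_{H,u}[g]\,\d W_t=\int_0^u g\,\d B^H_t$, which is exactly the claim.

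I expect the main obstacle to be the functional-analytic extension rather than the truncation step. For $H<\frac12$ the natural domain of the abstract $B^H$-Wiener integral is not a space of functions, and one must invoke the characterization of this domain and the density of step functions in it (as in Pipiras and Taqqu \cite{Pipiras-Taqqu-2001}) to justify the isometry and the limiting argument. The causal truncation identity above, by contrast, is a direct computation once the sidedness of $I_{u-}^{\cdot}$ is taken into account.
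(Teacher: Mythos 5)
The paper itself offers no proof of this lemma: it is stated as background and deferred to the literature (Pipiras and Taqqu \cite{Pipiras-Taqqu-2001}, and \cite{Sottinen-Yazigi-2014} for the abstract setting), so your argument cannot be checked against an in-paper proof and must stand on its own. On its own terms it is the standard argument and is sound in outline: the causal-truncation identity $\K_{H,u}^{-1}[\1_{[0,t)}]=k_H^{-1}(t,\cdot)$ for $t\le u$ is correct in both regimes (for $H<\frac12$ the right-sided integral truncates trivially; for $H>\frac12$ the inner integral is independent of $u$ on $\{s<t\}$ and vanishes on $\{s>t\}$, so its $s$-derivative cannot depend on $u$), Lemma \ref{lmm:vc} then yields the identity on indicators, linearity handles step functions, and the isometric extension together with the substitution $f=\K_{H,u}[g]$ gives both identities. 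This is precisely the route taken in the cited references, so your proposal reconstructs the intended proof rather than deviating from it.

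One genuine error: your closing remark has the two regimes backwards. It is for $H>\frac12$, not $H<\frac12$, that the natural integrand space fails to be a space of functions: by Pipiras and Taqqu, the inner-product space of function integrands is complete for $H\le\frac12$ and incomplete for $H>\frac12$, its completion then containing objects that are not functions. Correspondingly, the delicate point in your extension step sits in the case $H>\frac12$: there $\K_{H,u}^{-1}$ is a (weighted) fractional \emph{derivative}, so $\K_{H,u}^{-1}[f]$ need not exist as a function for arbitrary $f\in L^2([0,u])$ --- it does exactly when $f$ lies in the image of $\K_{H,u}$ --- and the relation $\K_{H,u}\bigl[\K_{H,u}^{-1}[f]\bigr]=f$, which your limiting argument quietly uses, requires this representability. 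For $H<\frac12$, by contrast, $\K_{H,u}^{-1}$ is a fractional integral defined on all of $L^2([0,u])$, and the integrand space is a complete function space, so there is no obstacle there. The concern you raise is thus legitimate, but it attaches to the opposite case from the one you name; the first transfer identity ``for all $f\in L^2([0,u])$'' should be read with this caveat when $H>\frac12$ (a looseness already present in the lemma's formulation, not introduced by you).
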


\section{Regular Conditional Law}

\begin{thm}[Prediction Law]\label{thm:cond_law}
The conditional process $\hat B^{H}(u)= (\hat B^H_t(u))_{t\ge u}$ is Gaussian with $\F_u$-measurable mean function
\begin{equation}\label{eq:cond_mean}
\hat m^H_t(u) = B^H_u - \int_0^u \Psi_H(t,s|u) \, \d B^H_s,
\end{equation}
where
$$
\Psi_H(t,s|u) =
- \frac{\sin(\pi(H-\frac{1}{2}))}{\pi} s^{\frac{1}{2}-H}(u-s)^{\frac{1}{2}-H}
\int_u^t \frac{z^{H-\frac{1}{2}}(z-u)^{H-\frac{1}{2}}}{z-s}\, \d z,
$$
and deterministic covariance function
\begin{equation}
\label{eq:cond_cov}
\hat r_H(t,s|u) 
= r_H(t,s) - \int_0^u k_H(t,v)k_H(s,v)\d v.
\end{equation}
Moreover, the regular conditional law is continuous with respect to the conditioning trajectory $(B^H_v)_{v\le u}$.
\end{thm}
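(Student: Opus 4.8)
\section*{Proof proposal}

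The plan is to reduce the conditioning problem to the elementary Gaussian conditioning of a process with independent increments, exploiting the Volterra structure of Lemma \ref{lmm:vc}. The crucial first observation is that the driving Brownian motion $W$ generates the same filtration as $B^H$: since $W_t = \int_0^t k_H^{-1}(t,s)\,\d B^H_s$ is adapted and $B^H_t = \int_0^t k_H(t,s)\,\d W_s$ is adapted, both kernels being of Volterra (lower-triangular) type, one has $\F_u = \sigma(B^H_v : v\le u) = \sigma(W_v: v\le u)$. I would then split the representation at $u$,
$$
B^H_t = \int_0^u k_H(t,s)\,\d W_s + \int_u^t k_H(t,s)\,\d W_s, \qquad t\ge u,
$$
observing that the first integral is $\F_u$-measurable while the second, by independence of the increments of $W$, is a centered Gaussian process independent of $\F_u$. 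Consequently the conditional law of $(B^H_t)_{t\ge u}$ is that of an $\F_u$-measurable deterministic shift of a fixed Gaussian family, hence Gaussian, with conditional mean $\int_0^u k_H(t,s)\,\d W_s$ and conditional covariance equal to the covariance of the second term, which is automatically deterministic.

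The covariance formula \eqref{eq:cond_cov} then follows immediately. The Wiener isometry gives $\E[\int_u^t k_H(t,v)\,\d W_v \int_u^s k_H(s,v)\,\d W_v] = \int_u^{t\wedge s} k_H(t,v)k_H(s,v)\,\d v$, and since $r_H(t,s)=\int_0^{t\wedge s}k_H(t,v)k_H(s,v)\,\d v$ (the Volterra representation of the covariance from Lemma \ref{lmm:vc}) while $t\wedge s\ge u$, subtracting the $[0,u]$-part yields $\hat r_H(t,s|u)=r_H(t,s)-\int_0^u k_H(t,v)k_H(s,v)\,\d v$.

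For the mean formula \eqref{eq:cond_mean} I would re-express the $W$-integral $\int_0^u k_H(t,s)\,\d W_s$ as an integral against $B^H$ via the Transfer Principle (Lemma \ref{lmm:tp}). Writing $k_H(t,\cdot)=k_H(u,\cdot)+[k_H(t,\cdot)-k_H(u,\cdot)]$ on $[0,u]$ and using $B^H_u=\int_0^u k_H(u,s)\,\d W_s$ separates off the $B^H_u$ term, so that it remains to show
$$
\K_{H,u}^{-1}\big[k_H(t,\cdot)-k_H(u,\cdot)\big](s) = -\,\Psi_H(t,s|u).
$$
This is the main computational obstacle: inserting the explicit kernel \eqref{eq:kernel} into the fractional derivative defining $\K_{H,u}^{-1}$, one must evaluate the resulting singular integrals and collect the Beta- and Gamma-type constants into the factor $\sin(\pi(H-\frac{1}{2}))/\pi$ and the Cauchy-type kernel $\int_u^t z^{H-1/2}(z-u)^{H-1/2}(z-s)^{-1}\,\d z$. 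I expect this to be a delicate but essentially mechanical fractional-calculus computation, best organized by a Fubini interchange followed by a change of variables reducing the inner integral to a standard Beta integral.

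Finally, for continuity in the sense of LaGatta, I would use that two Gaussian measures with a common covariance operator are translates of one another, so that $\P_T^{y_n}\to\P_T^y$ weakly as soon as their means converge in the norm of $C([u,T])$; since the conditional covariance \eqref{eq:cond_cov} does not depend on the conditioning path, it suffices to prove that $y\mapsto \hat m^y_t(u):=y(u)-\int_0^u \Psi_H(t,s|u)\,\d y(s)$ is continuous from $C([0,u])$ to $C([u,T])$ in sup-norm. The obstacle here is that the defining integral is a Stieltjes integral against a singular kernel; I would turn it into a bona fide continuous linear functional of $y$ by integrating by parts, moving the $s$-derivative onto $\Psi_H$, and checking that the resulting kernel $\partial_s\Psi_H(t,\cdot|u)$ is absolutely integrable on $[0,u]$ and depends continuously on $t\in[u,T]$, the only singularities being the integrable factors $s^{1/2-H}$ and $(u-s)^{1/2-H}$. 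Dominated convergence then gives $\hat m^{y_n}\to\hat m^{y}$ uniformly in $t$ whenever $y_n\to y$ uniformly, which is the asserted continuity.
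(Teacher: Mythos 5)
Your treatment of Gaussianity, the conditional mean and the conditional covariance is essentially the paper's own argument: the same Volterra split of $B^H_t=\int_0^u k_H(t,s)\,\d W_s+\int_u^t k_H(t,s)\,\d W_s$ (with the filtration identity $\F_u=\F_u^W$, which the paper uses implicitly), the same It\^o-isometry computation for \eqref{eq:cond_cov}, and the same reduction of \eqref{eq:cond_mean} to computing $\K_{H,u}^{-1}\left[k_H(t,\cdot)-k_H(u,\cdot)\right]$ via the Transfer Principle. The paper does not carry out that fractional-calculus computation either; it cites Pipiras and Taqqu, Theorem 7.1, which is exactly the computation you sketch. Up to that point your proposal is sound and matches the paper.

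The continuity part is where you genuinely diverge from the paper, and where there is a genuine gap. Your reduction itself is correct and attractive: with a deterministic common covariance, weak convergence of $\P_T^{y_n}$ to $\P_T^{y}$ follows once the means converge uniformly, so everything hinges on the map $y\mapsto y(u)-\int_0^u\Psi_H(t,s|u)\,\d y(s)$ being bounded on $C([0,u])$. But your key technical claim --- that after integration by parts the kernel $\partial_s\Psi_H(t,\cdot|u)$ is absolutely integrable, ``the only singularities being the integrable factors $s^{\frac12-H}$ and $(u-s)^{\frac12-H}$'' --- fails for $H\in(\frac12,1)$, i.e.\ in the long-range dependent case of main interest. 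Indeed, for $H>\frac12$ the Cauchy-type integral $\int_u^t z^{H-\frac12}(z-u)^{H-\frac12}(z-s)^{-1}\,\d z$ converges to a finite positive limit as $s\to u^-$, so $\Psi_H(t,s|u)$ itself blows up like $(u-s)^{\frac12-H}$ at $s=u$ (this is the well-known ``infinite weight of the immediate past'' in fBm prediction). Consequently $\partial_s\Psi_H(t,s|u)\sim C\,(u-s)^{-\frac12-H}$ is \emph{not} integrable near $s=u$, the boundary term $\Psi_H(t,u^-|u)\,y(u)$ in the integration by parts is infinite, and $s\mapsto\Psi_H(t,s|u)$ has infinite total variation on $[0,u]$. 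The last point shows the failure is not a removable technicality: the sup-norm operator norm of $y\mapsto\int_0^u\Psi_H(t,s|u)\,y'(s)\,\d s$ on smooth paths is precisely governed by that total variation, so no rearrangement of the formula yields a bounded linear functional on all of $C([0,u])$; at best your argument gives continuity with respect to a H\"older-type norm on the conditioning path. Even for $H<\frac12$, where the conclusion of your step does hold, the justification is not the one you give: naive differentiation of the product $(u-s)^{\frac12-H}\int_u^t\frac{z^{H-\frac12}(z-u)^{H-\frac12}}{z-s}\,\d z$ produces two terms of the non-integrable order $(u-s)^{-1}$, and one must verify that they cancel at leading order, leaving an integrable $(u-s)^{-\frac12-H}$ singularity.

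This is exactly the difficulty the paper's route is designed to bypass: instead of touching the singular prediction kernel at all, the paper invokes LaGatta's continuous-disintegration theorem (Theorem 2.4 of \cite{LaGatta-2013}), whose hypothesis is a boundedness condition on ratios of the \emph{unconditional} covariance $r_H$, a perfectly regular function. What your approach would buy, if it worked, is a self-contained and more transparent proof; what the paper's approach buys is precisely that one never needs $\Psi_H(t,\cdot|u)$ to define a bounded operation on $C([0,u])$. If you want to salvage a direct argument, you must confront the $H>\frac12$ blow-up head-on rather than assume it away.
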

\begin{proof}
Let $B^H$ and $W$ be as in Lemma \ref{lmm:vc}. Let $t\ge u$. Then
\begin{eqnarray*}
\hat m_t(u) &=& \E\left[B^H_t\, \big|\, \F_u\right] \\
&=&
\E\left[\int_0^t k_H(t,s)\, \d W_s \, \bigg|\, \F_u^W\right] \\
&=& 
\int_0^u k_H(t,s)\, \d W_s \\
&=&
\int_0^u k_H(u,s)\, \d W_s - \int_0^u \left[k_H(u,s)-k_H(t,s)\right] \d W_s \\
&=&
B^H_u - \int_0^u \left[k_H(u,s)-k_H(t,s)\right] \d W_s.
\end{eqnarray*}
It remains to show that the function $s\mapsto k_H(u,s)-k_H(t,s)$, $s\in [0,u]$, belongs to $\K_{H,u}^{-1} L^2([0,u])$ and then to apply Lemma \ref{lmm:tp} and calculate the transfered kernel for the equation
$$
\int_0^u \left[k_H(u,s)-k_H(t,s)\right] \d W_s
=
\int_0^u \K_{H,u}^{-1}\left[k_H(u,\cdot)-k_H(t,\cdot)\right](s)\, \d B^H_s.
$$
This was done in Pipiras and Taqqu \cite[Theorem 7.1]{Pipiras-Taqqu-2001}. 

Let us then calculate the conditional covariance.  Let $W$ be as before. Then
\begin{eqnarray*}
\hat r_H(t,s|u) &=&
\E\left[\left(B^H_t-\hat m_t(u)\right)\left(B^H_s-\hat m_s(u)\right)\,\big|\, \F_u\right] \\
&=&
\E\left[
\left(\int_0^t k_H(t,v)\, \d W_v - \int_0^u k_H(t,v)\, \d W_v\right)\times\right. \\
& & \left.
\left(\int_0^s k_H(s,w)\, \d W_w - \int_0^u k_H(s,w)\, \d W_w\right)
\,\bigg|\, \F_u^W\right] \\
&=&
\E\left[
\int_u^t k_H(t,v)\, \d W_v
\int_u^s k_H(s,w)\, \d W_w 
\,\bigg|\, \F_u^W\right] \\
&=&
\int_u^{t\wedge s} k_H(t,v) k_H(s,v)\, \d v \\
&=&
r_H(t,s) - \int_0^u k_H(t,v) k_H(s,v)\, \d v,
\end{eqnarray*}
where the kernel $k_H(t,s)$ is given by \eqref{eq:kernel}.

Finally, to invoke \cite[Theorem 2.4]{LaGatta-2013}, we must show that
$$
\sup_{v\in[0,u]} \frac{\sup_{t\in [0,T]}|r_H(v,t)|}{\sup_{w\in [0,u]}|r_H(v,w)|}
< \infty
$$
for all $T>0$. Since $r_H$ is continuous,
$$
\frac{\sup_{t\in [0,T]}|r_H(v,t)|}{\sup_{w\in [0,u]}|r_H(v,w)|}
=
\frac{|r_H(v,t^*_{T,v})|}{|r_H(v,w^*_{u,v})|} \\
\le
\frac{|r_H(v,t^*_{T,v})|}{|r_H(v,v)|}. 
$$
The ratio above is obviously bounded for all $v\in[\varepsilon,u]$ for any $\varepsilon>0$. As for $v\to 0$, 
$$
\limsup_{v\to 0} \frac{|r_H(v,t^*_{T,v})|}{|r_H(v,v)|} \le 1,
$$
since $r_H(v,t_{T,v}^*) = \sup_{t\in[0,T]} r_H(v,t) \ge r_H(v,v)$.
\end{proof}

\begin{rem}[Brownian Motion]\label{rem:bm}
For $H=\frac{1}{2}$, we have $k_{\frac{1}{2}}(t,s) = \1_{[0,t)}(s)$ and $\K_{\frac{1}{2},u}$ is the identity operator.  Consequently, we recover from the proof of Theorem \ref{thm:cond_law} that
\begin{eqnarray*}
\hat m^{\frac{1}{2}}_t(u) &=& W_u, \\
\hat r_{\frac{1}{2}}(t,s|u) &=& t \wedge s - u.
\end{eqnarray*}
\end{rem}

\begin{rem}[Prediction Martingale]
The formula \eqref{eq:cond_mean} for the conditional expectation $\hat m^H_t(u)$ is rather complicated.  Let us note, however, that for each fixed prediction horizon $t>0$, the process $\hat m^H_t(\cdot)$ is a Gaussian martingale on $[0,t]$ with bracket
$$
\d\la \hat m_t(\cdot) \ra_u = k_H(t,u)^2\, \d u.
$$
\end{rem}


Next we investigate the conditional covariance $\hat r_H(t,s\,|u)$ for fixed $s\le t$ as a function of $u\in(0,s)$. The proofs are rather technical and lengthy. For this reason they are postponed into Section \ref{sec:proofs}.

\begin{pro}[Conditional Covariance]\label{pro:func_decreasing}
$\hat r_H(t,s\,|\cdot)$ is infinitely differentiable and strictly decreasing on $(0,s)$ for any $H\in(0,1)$. For $H\in[\frac{1}{2},1)$ it is also convex.
\end{pro}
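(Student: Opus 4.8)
The plan is to read everything off the closed form \eqref{eq:cond_cov}. Since $\hat r_H(t,s\,|u)=r_H(t,s)-\int_0^u k_H(t,v)k_H(s,v)\,\d v$, the fundamental theorem of calculus gives
$$
\frac{\partial}{\partial u}\hat r_H(t,s\,|u)=-\,k_H(t,u)\,k_H(s,u),\qquad u\in(0,s).
$$
Because $u<s\le t$, the second argument of $k_H$ always stays strictly below the first, so we only ever evaluate the kernel away from its diagonal singularity. All three assertions---smoothness, strict monotonicity, convexity---thus reduce to properties of the map $v\mapsto k_H(w,v)$ on $(0,w)$.

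For the first two assertions I would use the self-similar form of the kernel. Rescaling the integral in \eqref{eq:kernel} by $z=vx$ shows that, for $0<v<w$,
$$
k_H(w,v)=c_H\,v^{H-\frac12}\,g_H\!\left(\tfrac{w}{v}\right)
$$
for a positive constant $c_H$ and a fixed function $g_H\in C^\infty((1,\infty))$. Consequently $v\mapsto k_H(w,v)$ is $C^\infty$ on $(0,w)$, the derivative $-k_H(t,u)k_H(s,u)$ is $C^\infty$ in $u$ on $(0,s)$, and therefore $\hat r_H(t,s\,|\cdot)$ is infinitely differentiable. For strict monotonicity it then suffices to prove $k_H(w,v)>0$ for $0<v<w$, since this forces $\partial_u\hat r_H(t,s\,|u)<0$. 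Positivity is immediate for $H<\frac12$, where both terms of \eqref{eq:kernel} are nonnegative, trivial for $H=\frac12$ since $k_{1/2}=\1_{[0,t)}$, and for $H>\frac12$ it follows from the Molchan--Golosov form, in which $g_H(x)=\int_1^x(y-1)^{H-\frac32}y^{H-\frac12}\,\d y>0$.

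The remaining, and genuinely delicate, point is convexity for $H\in[\frac12,1)$. Differentiating once more,
$$
\frac{\partial^2}{\partial u^2}\hat r_H(t,s\,|u)=-\frac{\partial}{\partial u}\big[k_H(t,u)\,k_H(s,u)\big],
$$
so in view of the positivity just established it is enough to show that $v\mapsto k_H(w,v)$ is decreasing: the product of two positive decreasing functions is decreasing, whence the second derivative is $\ge0$. This monotonicity of the kernel is where I expect the main work to lie. Writing $x=w/v$, the sign of $\partial_v k_H(w,v)$ equals that of $\phi_H(x):=(H-\frac12)g_H(x)-x\,g_H'(x)$, and a direct estimate is inconclusive because $\phi_H$ is itself non-monotone, with an interior critical point at $x^\ast=2/(2H-1)$. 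The device that resolves this is to integrate $g_H$ by parts, which rewrites
$$
-\phi_H(x)=x^{H-\frac12}(x-1)^{H-\frac32}+\Big(H-\tfrac12\Big)\int_1^x(y-1)^{H-\frac12}y^{H-\frac32}\,\d y,
$$
a sum of manifestly positive terms for $H>\frac12$; hence $\phi_H<0$ and $v\mapsto k_H(w,v)$ is strictly decreasing. The boundary case $H=\frac12$ is the affine profile $\hat r_{1/2}(t,s\,|u)=t\wedge s-u$ of Remark \ref{rem:bm}, which is (weakly) convex. Combining the two regimes yields the claimed convexity, completing the plan.
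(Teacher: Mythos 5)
Your proposal is correct, and it follows the paper's overall reduction exactly: differentiate \eqref{eq:cond_cov} to get $\partial_u \hat r_H(t,s|u) = -k_H(t,u)k_H(s,u)$, deduce smoothness and strict decrease from smoothness and positivity of the kernel, and reduce convexity for $H>\frac12$ to showing that the kernel is decreasing in its second argument (a product of positive decreasing functions being decreasing). The difference lies in how that key monotonicity step is established. The paper invokes Jost's hypergeometric representation $k_H(t,s)=C_H(t-s)^{H-\frac12}F\left(\frac12-H,H-\frac12,H+\frac12;\frac{s-t}{s}\right)$, applies a Pfaff-type transformation from Beals--Wong, and then uses the Euler integral formula for $F$ to exhibit the relevant quantity as an integral whose integrand is manifestly increasing in the scaling variable. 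You instead work from the Molchan--Golosov form $k_H(w,v)=c_H v^{H-\frac12}g_H(w/v)$ with $g_H(x)=\int_1^x(y-1)^{H-\frac32}y^{H-\frac12}\,\d y$, reduce the sign of $\partial_v k_H$ to that of $\phi_H(x)=(H-\frac12)g_H(x)-x g_H'(x)$, and settle it by a single integration by parts giving
$$
-\phi_H(x)=x^{H-\frac12}(x-1)^{H-\frac32}+\left(H-\tfrac12\right)\int_1^x(y-1)^{H-\frac12}y^{H-\frac32}\,\d y>0;
$$
I verified this identity and it is correct. Your route buys elementarity and self-containedness---nothing beyond calculus is used, whereas the paper leans on special-function identities---and as a bonus you supply the positivity of $k_H$ in all three regimes $H<\frac12$, $H=\frac12$, $H>\frac12$ and the trivial affine case $H=\frac12$ of convexity, both of which the paper asserts without argument. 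The paper's approach, in exchange, connects the kernel to standard hypergeometric machinery, which may be preferable if one wants to cite known formulas rather than redo computations.
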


\begin{rem}[Short-Range Dependent Conditional Covariance]
For $H\in(0,\frac{1}{2})$, $\hat r_H(t,s|\cdot)$ is neither convex nor concave. Indeed, it can be shown that for $H\in(0,\frac{1}{2})$, the kernel $k_H$ is positive and
$$
\lim_{s\to 0+} k_H(t,s) = \lim_{s\to t-} k_H(t,s) = \infty.
$$ 
Therefore
$$
\frac{\partial}{\partial u}\hat r_H(t,s|u) 
=
-k_H(t,u)k(s,u)
$$
is neither increasing nor decreasing in $u$.
\end{rem}

\begin{pro}[No-Information Asymptotics]\label{pro:0_asym}
Let $t\geq s$ be fixed.
\begin{enumerate}
\item For $H<\frac{1}{2}$ we have, as $u\rightarrow 0$,
\begin{equation*}
\hat r_H(t,s|u) = r_H(t,s) - C_H u^{2H} + o\left(u^{2H}\right),
\end{equation*}
where 
$$
C_H = \frac{d_H^2}{2H} \left(H-\frac{1}{2}\right)^2 \left(\int_1^\infty w^{H-\frac{3}{2}}(w-1)^{H-\frac{1}{2}}\d w\right)^2.
$$
\item For $H>\frac{1}{2}$ we have, as $u\rightarrow 0$,
\begin{equation*}
\hat r_H(t,s|u) = r_H(t,s) - C_{H,t,s}u^{2-2H} + o\left(u^{2-2H}\right),
\end{equation*}
where
$$
C_{H,t,s} = \frac{d_H^2(ts)^{2H-1}}{8-8H}.
$$
\end{enumerate}
\end{pro}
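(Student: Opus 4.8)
The plan is to start from the covariance representation \eqref{eq:cond_cov}, which shows that the whole statement concerns the small-$u$ behaviour of the subtracted integral
$$
J(u) := \int_0^u k_H(t,v)\,k_H(s,v)\,\d v,
\qquad
\hat r_H(t,s|u) = r_H(t,s) - J(u).
$$
So it suffices to prove $J(u) = C\,u^{\alpha} + o(u^{\alpha})$ with $\alpha = 2H$, $C = C_H$ in case (i) and $\alpha = 2-2H$, $C = C_{H,t,s}$ in case (ii). Everything reduces to the pointwise asymptotics of the Volterra kernel $k_H(t,v)$ as $v\to 0$ for fixed $t$, read off from \eqref{eq:kernel}. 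Writing $g_t(v):=\int_v^t z^{H-3/2}(z-v)^{H-1/2}\,\d z$ for the integral appearing there and substituting $z=vw$ gives the scaling identity $g_t(v)=v^{2H-1}\int_1^{t/v} w^{H-3/2}(w-1)^{H-1/2}\,\d w$, which is the key computational device.

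First I would settle the kernel asymptotics case by case. For $H<\tfrac12$ the power term in \eqref{eq:kernel} satisfies $(t/v)^{H-1/2}(t-v)^{H-1/2}\sim t^{2H-1}v^{1/2-H}\to 0$, whereas in $g_t(v)$ the upper limit $t/v\to\infty$ and the integral converges to the finite constant $I_H:=\int_1^\infty w^{H-3/2}(w-1)^{H-1/2}\,\d w$ (finite precisely because $2H-2<-1$). Hence the second summand dominates and $k_H(t,v)\sim d_H(\tfrac12-H)\,I_H\,v^{H-1/2}$; notably the leading coefficient does not depend on $t$, since the blow-up originates near the diagonal $z\approx v$ and never feels the far endpoint $t$ --- this is exactly why $C_H$ carries no $(t,s)$-dependence. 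For $H>\tfrac12$ the situation is opposite: both summands are of the same order $v^{1/2-H}$, with $g_t(v)\to\int_0^t z^{2H-2}\,\d z=t^{2H-1}/(2H-1)$, and a short expansion shows that the two leading contributions partially cancel, the surviving coefficient being $1-\tfrac{H-1/2}{2H-1}=\tfrac12$, so that $k_H(t,v)\sim\tfrac{d_H}{2}t^{2H-1}v^{1/2-H}$. In either regime the normalised kernel $\phi_t(v):=v^{|H-1/2|}k_H(t,v)$ has a finite nonzero limit $L_t$ as $v\to 0$, and is therefore bounded on some interval $(0,\delta]$.

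The asymptotics of $J(u)$ then follow by the rescaling $v=u\xi$, $\xi\in(0,1)$. Since $k_H(t,v)=v^{-|H-1/2|}\phi_t(v)$,
$$
J(u)=u\int_0^1 k_H(t,u\xi)k_H(s,u\xi)\,\d\xi
= u^{\alpha}\int_0^1 \xi^{-2|H-1/2|}\,\phi_t(u\xi)\,\phi_s(u\xi)\,\d\xi,
\qquad \alpha=1-2|H-1/2|,
$$
and one checks $\alpha=2H$ for $H<\tfrac12$ and $\alpha=2-2H$ for $H>\tfrac12$. For each fixed $\xi$ the integrand converges to $\xi^{-2|H-1/2|}L_tL_s$, while boundedness of $\phi_t,\phi_s$ near $0$ dominates it by $M^2\xi^{-2|H-1/2|}$, which is integrable on $(0,1)$ because $2|H-1/2|<1$ for every $H\in(0,1)$. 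Dominated convergence then yields $J(u)\sim\frac{L_tL_s}{1-2|H-1/2|}u^{\alpha}=\frac{L_tL_s}{\alpha}u^{\alpha}$, and substituting the two limits $L_t$ recovers exactly $C_H$ and $C_{H,t,s}$.

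The main obstacle is the rigorous derivation of the pointwise kernel asymptotics, and it is genuinely different in the two cases. For $H<\tfrac12$ one must justify the limit $g_t(v)\sim v^{2H-1}I_H$ together with the fact that the vanishing power term really is of strictly lower order, so that the equivalence $k_H(t,v)\sim d_H(\tfrac12-H)I_Hv^{H-1/2}$ holds with an error that, after the $v=u\xi$ rescaling, stays inside $o(u^{2H})$. For $H>\tfrac12$ the delicate point is the partial cancellation: both summands of \eqref{eq:kernel} blow up at the same rate $v^{1/2-H}$, so one must expand each to leading order and confirm that the difference of coefficients is the nonzero value $d_H/2$ rather than $0$; a spurious extra cancellation here would change the exponent of $u$. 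Once these equivalences (and the attendant boundedness of $\phi_t$ near $0$) are in hand, the dominated-convergence step and the elementary $\xi$-integral are routine.
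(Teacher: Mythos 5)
Your proposal is correct, and it reaches both constants by a route that is organized genuinely differently from the paper's. The paper multiplies out the two kernels and splits $k_H(t,v)k_H(s,v)$ into four cross terms $I^H_1,\dots,I^H_4$ built from the power factor and from $\beta_H(\tau)=\int_1^\tau w^{H-3/2}(w-1)^{H-1/2}\,\d w$; it then expands each term as a leading power of $v$ plus an explicit $O(\cdot)$ remainder and integrates term by term: for $H<\frac12$ only $I^H_4$ survives at order $v^{2H-1}$ (which is why $C_H$ is free of $t,s$), while for $H>\frac12$ all four terms contribute at order $v^{1-2H}$ with coefficients $1,-\frac12,-\frac12,\frac14$ times $(ts)^{2H-1}$, summing to $\frac14$. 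You instead establish a single pointwise equivalence $k_H(t,v)\sim L_t\,v^{-|H-\frac12|}$ as $v\to 0$, with $L_t=d_H(\frac12-H)\beta_H(\infty)$ for $H<\frac12$ and $L_t=\frac{d_H}{2}t^{2H-1}$ for $H>\frac12$ (your cancellation $1-\frac{H-1/2}{2H-1}=\frac12$ is exactly the paper's $1-\frac12-\frac12+\frac14=\frac14$ seen at the level of the kernel rather than the product), and then convert it into integral asymptotics by the scaling $v=u\xi$ and dominated convergence, with majorant $M^2\xi^{-2|H-1/2|}$ integrable since $2|H-\frac12|<1$. The analytic core — the behaviour of $\beta_H$ in the two regimes — is the same in both proofs, but your scaling/DCT device treats the two cases in one uniform framework and makes all the explicit error-exponent bookkeeping unnecessary: your closing worry that the kernel equivalences need error terms surviving the rescaling "inside $o(u^{2H})$" is actually moot, since pointwise convergence of $\phi_t(u\xi)\phi_s(u\xi)$ plus the uniform bound is all that dominated convergence requires, with no rate. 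What the paper's heavier decomposition buys in exchange is explicit remainder rates (e.g.\ the $O(v^{\frac12-H})$ terms) and the four-term structure itself, which it reuses verbatim in the proof of the full-information asymptotics (Proposition \ref{pro:s_asym}); your constants nevertheless agree exactly, as $\frac{L^2}{2H}=C_H$ and $\frac{L_tL_s}{2-2H}=\frac{d_H^2(ts)^{2H-1}}{8-8H}=C_{H,t,s}$.
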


\begin{rem}
It is interesting to note in Proposition \ref{pro:0_asym} the different asymptotic behavior for the long-range dependent case ($H>\frac{1}{2}$) and the short-range dependent case ($H<\frac{1}{2})$. Indeed, for the long-range dependent case the principal term in the ``remaining covariance'' $r_H(t,s)-\hat r_H(t,s|u)$ is $C_H u^{2H}$, where the constant $C_H$ is independent of $t$ and $s$.  In the short-range dependent case the principal term is $C_{H,t,s} u^{2-2H}$. So, the power reverts from $2H$ to $2-2H$ (and, consequently, the principal term remains convex) and the constant depends on $t$ and $s$.
\end{rem}

\begin{pro}[Full-Information Asymptotics]
\label{pro:s_asym}
Let $H\in (0,1) \setminus \left\{\frac{1}{2}\right\}$. Then
\begin{enumerate}
\item for $t=s$, we have, as $u\rightarrow s$,
\begin{equation*}
\hat r_H(s,s|u) = \frac{d_H^2}{2H} (s-u)^{2H} + o\left((s-u)^{2H}\right),
\end{equation*}
\item for $t>s$ we have, as $u\rightarrow s$,
\begin{equation*}
\hat r_H(t,s|u) = C_{H,t,s}(s-u)^{H+\frac{1}{2}} + o\left((s-u)^{H+\frac{1}{2}}\right),
\end{equation*}
where
$$
C_{H,t,s} = \frac{d_H^2}{H+\frac{1}{2}}\left[\left(\frac{t}{s}\right)^{H-\frac{1}{2}}(t-s)^{H-\frac{1}{2}} 
+ \left(\frac{1}{2} - H\right)s^{H-\frac{1}{2}}\int_1^{\frac{t}{s}}w^{H-\frac{3}{2}}(w-1)^{H-\frac{1}{2}}\right].
$$
\end{enumerate}
\end{pro}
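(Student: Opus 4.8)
The plan is to start from the Volterra form of the conditional covariance derived in the proof of Theorem \ref{thm:cond_law}: for $s\le t$,
$$
\hat r_H(t,s|u) = \int_u^s k_H(t,v)\,k_H(s,v)\,\d v.
$$
Since the interval of integration $(u,s)$ collapses to the point $s$ as $u\to s^-$, the leading asymptotics are governed entirely by the behavior of the kernels near $v=s$. My first step would be to read off from \eqref{eq:kernel} the local expansion of $k_H(s,v)$ as $v\to s^-$. Writing $\delta=s-v$, the power summand equals $d_H(s/v)^{H-\frac12}\delta^{H-\frac12}=d_H\delta^{H-\frac12}(1+O(\delta))$, while the substitution $z=v+x$ shows that the integral summand is of the smaller order $\delta^{H+\frac12}$; hence
$$
k_H(s,v)=d_H(s-v)^{H-\frac12}\bigl(1+O(s-v)\bigr),\qquad v\to s^-.
$$

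For part (i) I would simply square this, so that $k_H(s,v)^2=d_H^2(s-v)^{2H-1}(1+O(s-v))$, and integrate: the main term gives $d_H^2(s-u)^{2H}/(2H)$ and the $O(s-v)$ correction integrates to $O((s-u)^{2H+1})=o((s-u)^{2H})$. For part (ii), note that $k_H(t,v)$ is evaluated away from its own singularity (as $v<s<t$), so it is continuous at $v=s$ with finite value $k_H(t,s)$, which the substitution $z=sw$ in \eqref{eq:kernel} puts in exactly the bracketed form appearing in $C_{H,t,s}$. Replacing $k_H(t,v)$ by $k_H(t,s)$ and integrating against $d_H(s-v)^{H-\frac12}$ then produces $k_H(t,s)\,d_H(s-u)^{H+\frac12}/(H+\tfrac12)$, and since $C_{H,t,s}=d_H\,k_H(t,s)/(H+\tfrac12)$ this is precisely the claimed principal term.

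The delicate point is the error control in part (ii), namely that replacing $k_H(t,v)$ by its endpoint value $k_H(t,s)$ is legitimate. I would handle this by a standard $\varepsilon$-splitting: by continuity of $k_H(t,\cdot)$ at $s$, given $\varepsilon>0$ there is $\delta_0>0$ with $|k_H(t,v)-k_H(t,s)|<\varepsilon$ whenever $s-\delta_0<v<s$, so for $s-u<\delta_0$ the remainder integral is bounded by $\varepsilon\int_u^s k_H(s,v)\,\d v = \varepsilon\,O((s-u)^{H+\frac12})$, which is $o((s-u)^{H+\frac12})$. The same device absorbs the $O(s-v)$ factor in the expansion of $k_H(s,v)$. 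One should also record that the two regimes $H<\tfrac12$ and $H>\tfrac12$ are treated uniformly---only the sign of the exponent $H-\tfrac12$ differs, while $(s-v)^{H-\frac12}$ remains integrable on $(u,s)$ throughout---and that $C_{H,t,s}\neq0$, so that the stated term is genuinely the principal one.
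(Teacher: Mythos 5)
Your proposal is correct, and it reaches the result by a genuinely different organization than the paper, starting from the same representation $\hat r_H(t,s|u)=\int_u^s k_H(t,v)k_H(s,v)\,\d v$. The paper expands the product algebraically into four terms $d_H^2\left[I^H_1+I^H_2+I^H_3+I^H_4\right]$ built from $\beta_H(\tau)=\int_1^\tau w^{H-\frac32}(w-1)^{H-\frac12}\,\d w$, and estimates each integral $\int_u^s I^H_i(t,s,v)\,\d v$ separately: $I^H_2$ and $I^H_4$ are shown negligible, while $I^H_1$ and $I^H_3$ each contribute one of the two bracketed summands of $C_{H,t,s}$; for $t>s$ the paper in effect proves the continuity of $k_H(t,\cdot)$ at $v=s$ by hand, via the decomposition $\beta_H(t/v)=\beta_H(t/s)+\int_{t/s}^{t/v}$ and the comparison of $(t-v)^{H-\frac12}$ with $(t-s)^{H-\frac12}$. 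You bypass this bookkeeping with two observations the paper never makes explicit: the endpoint expansion $k_H(s,v)=d_H(s-v)^{H-\frac12}\bigl(1+O(s-v)\bigr)$, and the identity $C_{H,t,s}=\frac{d_H}{H+\frac12}\,k_H(t,s)$, which exposes the constant as a multiple of the kernel value at the endpoint and explains at once why both summands of the bracket appear; your continuity-plus-$\varepsilon$-splitting then handles the error. Two points you should state explicitly to make this airtight: the implied constants in your $O(s-v)$ terms are uniform for $v$ in a neighborhood of $s$ (immediate from \eqref{eq:kernel}, since $v$ stays bounded away from $0$), and the bound $\varepsilon\int_u^s k_H(s,v)\,\d v$ tacitly uses $k_H(s,v)\ge 0$ near $v=s$, which follows from your own expansion (or simply insert absolute values). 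Net comparison: your route is shorter and interprets the constant $C_{H,t,s}$; the paper's term-by-term computation is more mechanical but requires no appeal to continuity of $k_H(t,\cdot)$, deriving everything from explicit power estimates on the four pieces.
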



Finally, we examine the sample path continuity of the conditional process. Recall that a process $X=(X_t)_{t\in\R_+}$ is H\"older continuous of order $\gamma$, if for all $T>0$ there exists an almost surely finite random variable $C_T$ such that
\begin{equation}\label{eq:def-holder}
|X_t-X_s| \le C_T|t-s|^{\gamma}
\end{equation}
for all $t,s\le T$. The H\"older index of the process is the supremum of all $\gamma$ such that \eqref{eq:def-holder} holds.

Next we show that the H\"older index of the conditional process $\hat B^H(u)$ and the conditional mean $\hat m^H(u)$ are the same as that of the fractional Brownian motion $B^H$.  This is very important e.g. for pathwise stochastic analysis.

\begin{pro}[H\"older Continuity]
Let $u>0$ be fixed. Then the conditional process $\hat B^{H}(u)$ and the conditional mean $\hat m^H(u)$ both have H\"older index $H$.
\end{pro}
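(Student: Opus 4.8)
The plan is to reduce both claims to estimates on the $L^2$ modulus of continuity of Gaussian processes and then to invoke the standard Gaussian regularity machinery. Using the Volterra representation of Lemma~\ref{lmm:vc} together with $\F_u=\F^W_u$, I would write, for $t\ge u$, the fluctuation $Z_t:=\hat B^H_t(u)-\hat m^H_t(u)=\int_u^t k_H(t,v)\,\d W_v$, a centered Gaussian process with covariance $\hat r_H(t,s\,|u)=\int_u^{t\wedge s}k_H(t,v)k_H(s,v)\,\d v$, and the mean $\hat m^H_t(u)-B^H_u=\int_0^u\bigl(k_H(t,v)-k_H(u,v)\bigr)\,\d W_v$. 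Since the Brownian increments over the disjoint sets $[s,t]$ and $[u,s]$ are orthogonal, the It\^o isometry gives, for $u\le s\le t$,
$$
\E\bigl[(Z_t-Z_s)^2\bigr]=\int_s^t k_H(t,v)^2\,\d v+\int_u^s\bigl(k_H(t,v)-k_H(s,v)\bigr)^2\,\d v=|t-s|^{2H}-\int_0^u\bigl(k_H(t,v)-k_H(s,v)\bigr)^2\,\d v,
$$
while $\E[(\hat m^H_t(u)-\hat m^H_s(u))^2]=\int_0^u(k_H(t,v)-k_H(s,v))^2\,\d v$; the two variances add up to $|t-s|^{2H}$, reflecting the orthogonal decomposition $B^H=\hat m^H(u)+Z$.

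For the upper bound I would observe that the correction integrals are nonnegative, so $\E[(Z_t-Z_s)^2]\le|t-s|^{2H}$ and $\E[(\hat m^H_t(u)-\hat m^H_s(u))^2]\le|t-s|^{2H}$ on every $[u,T]$. Both processes being Gaussian, every even moment is bounded by the matching power of the variance, and Kolmogorov's continuity criterion yields H\"older continuity of each order $\gamma<H$. Hence the fluctuation, the conditional mean, and therefore the conditional process all have H\"older index at least $H$.

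For the lower bound on the fluctuation I would fix an interior $t\in(u,T]$, let $s\uparrow t$, and discard the second nonnegative integral to get $\E[(Z_t-Z_s)^2]\ge\int_s^t k_H(t,v)^2\,\d v$. Reading the boundary behavior $k_H(t,v)\sim d_H(t-v)^{H-\frac12}$ as $v\uparrow t$ off the explicit kernel \eqref{eq:kernel} (the integral term being of the lower order $(t-v)^{H+\frac12}$) shows $\int_s^t k_H(t,v)^2\,\d v\sim\frac{d_H^2}{2H}(t-s)^{2H}$, so $\E[(Z_t-Z_s)^2]\ge c_t(t-s)^{2H}$ for $s$ near $t$; a Gaussian Borel--Cantelli argument along a geometric sequence $s_n\uparrow t$ then forces $\limsup_{s\to t}|Z_t-Z_s|/|t-s|^\gamma=\infty$ almost surely for every $\gamma>H$, so the index of $Z$ equals $H$. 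For the conditional mean the roughness is confined to the conditioning instant: for $s,t\in[u+\delta,T]$ the map $t\mapsto k_H(t,v)$ is $C^1$ with $|\partial_t k_H(t,v)|\le C_\delta v^{\frac12-H}$, which is square-integrable on $[0,u]$ because $H<1$, so $\E[(\hat m^H_t(u)-\hat m^H_s(u))^2]\le C_\delta|t-s|^2$ and the mean is locally Lipschitz away from $u$; at $u$ itself I would combine the orthogonal identity $|t-u|^{2H}=\E[(\hat m^H_t(u)-B^H_u)^2]+\hat r_H(t,t\,|u)$ with the full-information asymptotics $\hat r_H(t,t\,|u)=\frac{d_H^2}{2H}(t-u)^{2H}+o((t-u)^{2H})$ of Proposition~\ref{pro:s_asym} to obtain $\E[(\hat m^H_t(u)-B^H_u)^2]=(1-\frac{d_H^2}{2H})(t-u)^{2H}+o((t-u)^{2H})$, where $1-\frac{d_H^2}{2H}=1-\frac{\Gamma(\frac32-H)}{\Gamma(H+\frac12)\Gamma(2-2H)}>0$ for $H\ne\frac12$; the same Gaussian argument then gives the mean index $H$. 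Finally, since at every interior $t$ the mean is Lipschitz while $Z$ is exactly of index $H$, their sum $\hat B^H(u)$ has index $H$ as well. (For $H=\frac12$ the mean degenerates to the constant $W_u$, cf.\ Remark~\ref{rem:bm}.)

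The main obstacle is upgrading the $L^2$ lower bound to the almost-sure statement $\limsup_{s\to t}|Z_t-Z_s|/|t-s|^\gamma=\infty$: one must control the correlations between the nested increments, which I would do by choosing a geometric sequence $s_n\uparrow t$ and exploiting the independence of the disjoint Brownian increments appearing in the representation $Z_s=\int_u^s k_H(s,v)\,\d W_v$. The second delicate point is extracting the boundary exponent $k_H(t,v)\sim d_H(t-v)^{H-\frac12}$ from \eqref{eq:kernel} uniformly over the whole range $H\in(0,1)$, in particular in the short-range regime $H<\frac12$ where $k_H$ is genuinely singular at $v=t$; the accompanying check $1-d_H^2/(2H)>0$ for $H\ne\frac12$ is a short Gamma-function computation.
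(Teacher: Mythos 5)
Your proposal is correct, and its skeleton coincides with the paper's proof: the Volterra/It\^o-isometry reduction to $L^2$ increments of the mean and of the fluctuation $Z_t=\int_u^t k_H(t,v)\,\d W_v$, the upper bound $\le|t-s|^{2H}$ followed by Kolmogorov's criterion, and, at the conditioning instant, the identity $|t-u|^{2H}=\E[(\hat m^H_t(u)-B^H_u)^2]+\hat r_H(t,t|u)$ combined with Proposition \ref{pro:s_asym} and the check $d_H^2<2H$ for $H\ne\frac12$ --- this is literally the paper's computation for the conditional mean. You deviate only in the lower-bound step, in two ways. First, the paper localizes all roughness at $t=u$ (for the mean and for the centered process alike) and then concludes by citing the sharpness of the Kolmogorov criterion for Gaussian processes \cite{Azmoodeh-Sottinen-Viitasaari-Yazigi-2014}, whereas you localize the roughness of $Z$ at interior points via the diagonal behaviour $k_H(t,v)\sim d_H(t-v)^{H-\frac12}$ (equivalently, part (i) of Proposition \ref{pro:s_asym}) and prove the required sharpness by hand. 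That substitution is legitimate, and in fact easier than you anticipate: the ``main obstacle'' you name (controlling correlations between nested increments, Borel--Cantelli along independent pieces) does not exist. For fixed $\gamma>H$ and $K>0$, each $Z_t-Z_{s_n}$ is centered Gaussian with variance at least $c(t-s_n)^{2H}$, so $\P\bigl(|Z_t-Z_{s_n}|\le K(t-s_n)^\gamma\bigr)\le\P\bigl(|N(0,1)|\le c^{-1/2}K(t-s_n)^{\gamma-H}\bigr)\to0$; by reverse Fatou the event that $|Z_t-Z_{s_n}|\le K(t-s_n)^\gamma$ holds for all large $n$ has probability zero, and a union over $K\in\N$ gives $\limsup_n|Z_t-Z_{s_n}|/(t-s_n)^\gamma=\infty$ almost surely, with no independence used at all (this elementary small-ball argument is precisely what the cited sharpness result packages in uniform form). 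Second, your observation that the mean is $L^2$-Lipschitz on $[u+\delta,T]$, so that at interior points the H\"older index of $\hat B^H(u)=\hat m^H(u)+Z$ equals that of $Z$, is a small genuine improvement in bookkeeping: it explicitly rules out cancellation between mean and fluctuation, a point the paper glosses over when it asserts that the claim for $\hat B^H(u)$ ``follows with the same arguments'' as for the mean.
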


\begin{proof}
Let us first consider the conditional mean $\hat m^H(u)$. Since
$$
\hat m^H_t(u) = \int_0^u k_H(t,v)\,\d W_v,
$$
we have, by the It\^o isometry,
$$
\E\left[\left(\hat m^H_t(u)- \hat m^H_s(u)\right)^2\right] 
= 
\int_0^u \left[k_H(t,v) - k_H(s,v)\right]^2 \,\d v.
$$
Let $s\le t$. By Lemma \ref{lmm:vc} and the It\^o isometry, we have
$$
|t-s|^{2H} = \int_0^t \left[k_H(t,v)-k_H(s,v)\right]^2 \, \d v.
$$
Thus 
$$
\E\left[\left(\hat m^H_t(u)- \hat m^H_s(u)\right)^2\right] 
\leq |t-s|^{2H}
$$
from which it follows, by the Kolmogorov continuity criterion, that $\hat m^H_t(u)$ is H\"older continuous of any order $\gamma<H$. Next we show that $\hat m^H_t(u)$ cannot be H\"older continuous of any order $\gamma > H$ at $t=u$. 
Since 
$$
\hat r_H(t,t|u) = \int_u^t \left[k_H(t,v)\right]^2\,\d v,
$$
Proposition \ref{pro:s_asym} gives
$$
\int_u^t \left[k_H(t,v)\right]^2\,\d v = \frac{d^2_H}{2H}|t-u|^{2H} + o\left(|t-u|^{2H}\right).
$$
Now it can be shown that for $H\neq \frac12$ we have
$\frac{d^2_H}{2H} < 1$, and hence we also have
$$
\int_0^u \left[k_H(t,v)-k_H(u,v)\right]^2 \, \d v = \left(1-\frac{d^2_H}{2H}\right)(t-u)^{2H} + o\left(|t-u|^{2H}\right).
$$
In particular, this shows that 
$$
\E\left[\left(\hat m^H_t(u)- \hat m^H_u(u)\right)^2\right] 
= 
\int_0^u \left[k_H(t,v) - k_H(u,v)\right]^2 \,\d v \geq c_H|t-u|^{2H}.
$$
Consequently, the claim follows from the sharpness of the Kolmogorov continuity criterion for Gaussian processes (see \cite{Azmoodeh-Sottinen-Viitasaari-Yazigi-2014}).

Let us then consider the conditional process $\hat B^H(u)$.  Since the conditional mean $\hat m^H(u)$ is H\"older continuous with index $\gamma$ if and only if $\gamma<H$, we may consider the centered conditional process $\bar B^H(u) = \hat B^H(u)-\hat m^H(u)$. 
Since
$$
\E\left[\left(\bar B^H_t(u)-\bar B^H_s(u)\right)^2\right]
=
|t-s|^{2H}-\int_0^u \left[k_H(t,v)-k_H(s,v)\right]^2 \, \d v,
$$
the claim follows with the same arguments as in the conditional mean case.
\end{proof}

\section{Numerical illustrations}
In this section we provide some numerical illustrations on the behaviour of conditional covariance function $\hat r_H(t,s|u)$. Especially, we illustrate asymptotics claimed in Propositions \ref{pro:0_asym} and \ref{pro:s_asym}. 
For simplicity, we put $t=s=1$. Figure 1 illustrates $\hat r_H(1,1|u)$ for values $u\in[0,1]$ with paremeters $H=0.25$ and $H=0.75$. The convexity for $H=0.75$ and non-convexity of $H=0.25$ is clearly visible.
\begin{figure}[!htbp]
  \centering
  \subfloat[$H=0.75$.]{\includegraphics[width=0.495\textwidth]{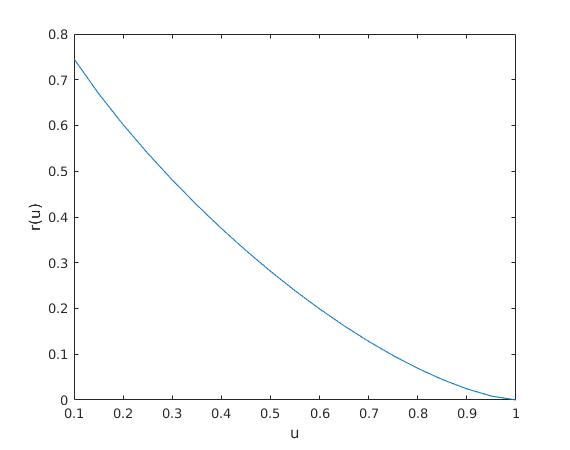}\label{fig:r_big}}
  \hfill
  \subfloat[$H=0.25$.]{\includegraphics[width=0.495\textwidth]{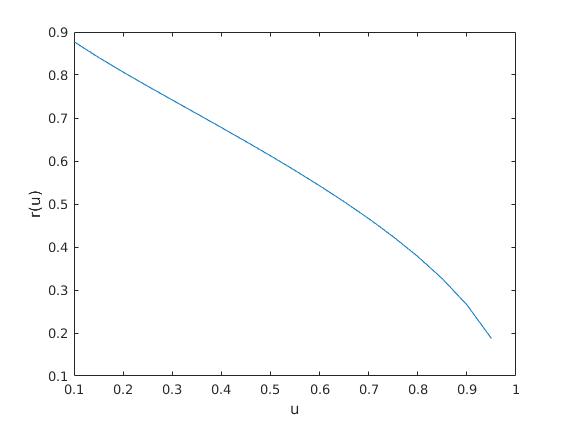}\label{fig:r_small}}
  \caption{$\hat r_H(1,1|u)$ with different values of $H$.}
\end{figure}
Figure 2 illustrates no-information asymptotics for $H=0.75$ and Figure 3 illustrates no-information asymptotics for $H=0.25$. In both pictures we have used short notation 
$$
g(u) = \frac{8-8H}{d_H^2}\frac{1-\hat r_H(1,1|u)}{u^{2H}}.
$$
By Proposition \ref{pro:0_asym}, $g(u)$ should converge towards 1 for $H>\frac12$ and towards constant for $H<\frac12$, which is clearly visible from Figures 2 and 3.
\begin{figure}[!tbp]
  \centering
  \subfloat[$0< u\leq 0.1$.]{\includegraphics[width=0.495\textwidth]{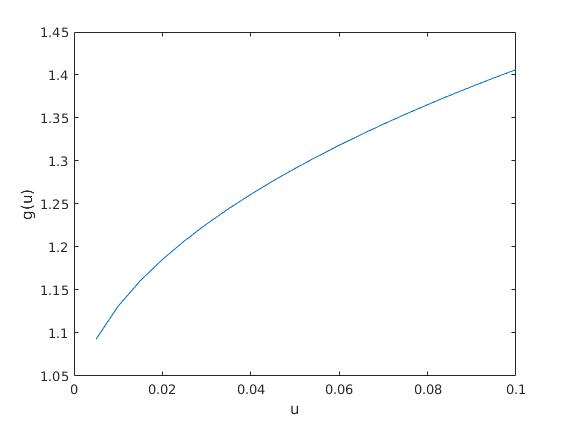}\label{fig:no_info_big}}
  \hfill
  \subfloat[$0.1\leq u\leq 1$.]{\includegraphics[width=0.495\textwidth]{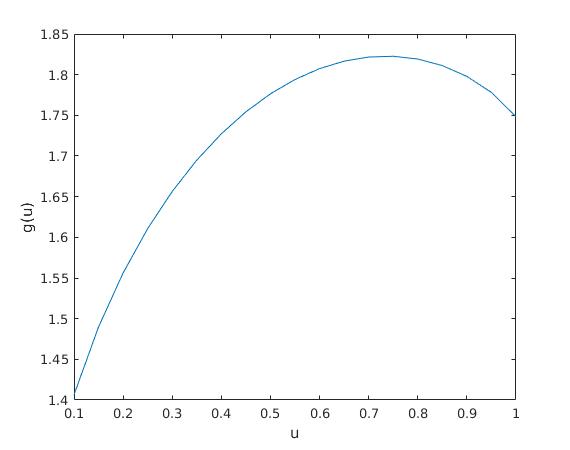}\label{fig:no_info_big_scale}}
  \caption{$g(u)$ against $u$ for $H=0.75$.}
\end{figure}
\begin{figure}[!tbp]
  \centering
  \subfloat[$0< u \leq 0.1$.]{\includegraphics[width=0.495\textwidth]{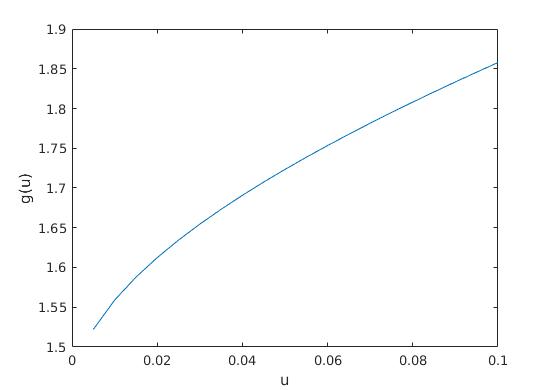}\label{fig:no_info_small}}
  \hfill
  \subfloat[$0.1\leq u \leq 1$.]{\includegraphics[width=0.495\textwidth]{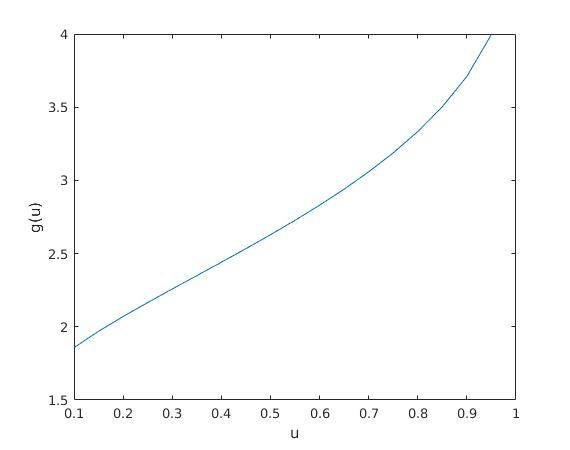}\label{fig:no_info_small_scale}}
  \caption{$g(u)$ against $u$ for $H=0.25$.}
\end{figure}
Finally, the full-information asymptotics given in Proposition \ref{pro:s_asym} is illustrated in Figure 4, where the function 
$$
f(u) = \frac{2H}{d_H^2}\frac{\hat r_H(1,1|u)}{(1-u)^{2H}}
$$
is plotted against $u$.
\begin{figure}[!tbp]
  \centering
  \subfloat[$H=0.75$.]{\includegraphics[width=0.495\textwidth]{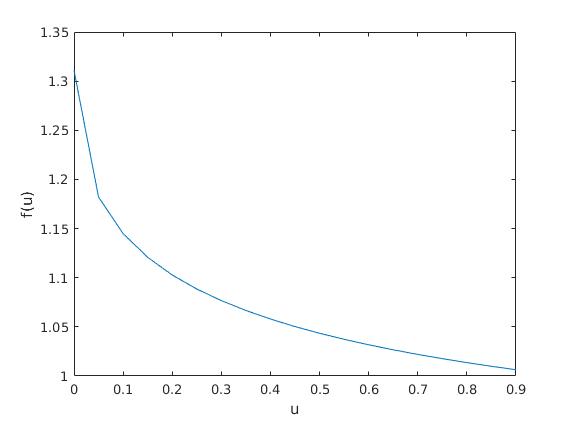}\label{fig:full_info_big}}
  \hfill
  \subfloat[$H=0.25$.]{\includegraphics[width=0.495\textwidth]{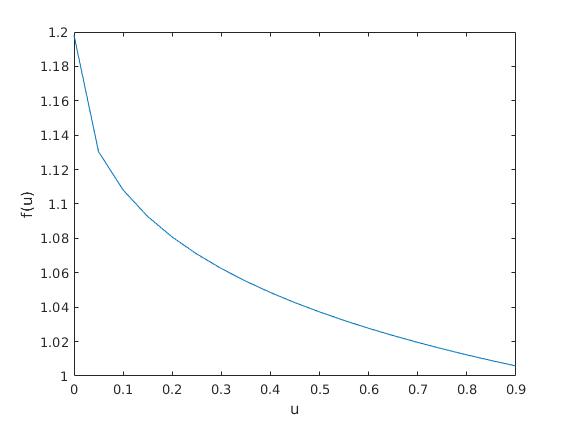}\label{fig:full_info_small}}
  \caption{$f(u)$ against $u$ for $H=0.25$ and $H=0.75$.}
\end{figure}
The convergence towards constant 1 is clearly visible in Figure 4. 


\section{Proofs of Propositions \ref{pro:func_decreasing}, \ref{pro:0_asym} and \ref{pro:s_asym}}
\label{sec:proofs}

\begin{proof}[Proof of Proposition \ref{pro:func_decreasing}]
Let $u<s\le t$. From \eqref{eq:cond_cov} we observe that 
$$
\frac{\partial}{\partial u} \hat r_H(t,s|u) = - k_H(t,u)k_H(s,u).
$$
Since $k_H(t,u)$ is infinitely differentiable with respect to $u$ for all $t$, it follows that $\hat r_H(t,s|u)$ is infinitely differentiable in $u$. 
Furthermore, since $k_H(t,u)>0$ for all $t$ and $u$, we observe that $\frac{\partial}{\partial u} \hat r_H(t,s|u) < 0$. Hence $\hat r_H(t,s|u)$ is strictly decreasing in $u$. 

Next we prove the convexity in $u$ of $\hat r_H(t,s|u)$ for $H>\frac{1}{2}$. For this it is sufficient to show that $-k_H(t,u)k_H(s,u)$ is increasing in $u$. Hence, it suffices to show that $k_H(t,s)$ is decreasing in $s$. Indeed, then $-k_H(t,u)k_H(s,u)$ is increasing in $u$ since $k_H(t,s)\geq 0$. By \cite[Eq. (1.2)]{Jost-2006} we have
$$
k_H(t,s) = C_H (t-s)^{H-\frac{1}{2}} F\left(\frac{1}{2} - H, H-\frac{1}{2}, H+\frac{1}{2}; \frac{s-t}{s}\right),
$$
where $F$ denotes the Gauss hypergeometric function. Denote $v = 1- \frac{s}{t}$ and let $t$ be fixed. Then 
$$
k_H(t,s) = k_H(t, t(1-v)) = t^{H-\frac{1}{2}}v^{H-\frac{1}{2}}F\left(\frac{1}{2} - H, H-\frac{1}{2}, H+\frac{1}{2}; \frac{v}{v-1}\right),
$$ 
where $v\in[0,1]$. By \cite[p. 269, eq. (8.2.9)]{Beals-Wong-2010} and the symmetry of Gauss hypergeometric function with respect to first two parameters, we have
$$
F\left(\frac{1}{2} - H, H-\frac{1}{2}, H+\frac{1}{2}; \frac{v}{v-1}\right) = (1-v)^{\frac{1}{2} -H}F\left(1,\frac{1}{2} - H, H+\frac{1}{2}; v\right),
$$
and hence it suffices to show that 
$$
\left(\frac{v}{1-v}\right)^{H-\frac{1}{2}}F\left(1,\frac{1}{2}-H,H+\frac{1}{2};v\right)
$$
is increasing as a function of $v$. To show this, we use the Euler integral formula \cite[p. 271, Proposition 8.3.1.]{Beals-Wong-2010}
$$
F(a,b,c;v) = \frac{1}{\B(b,c-b)}\int_0^1 x^{b-1}(1-x)^{c-b-1}(1-vx)^{-a}\d x
$$
provided that $|v|<1$ and $c>b>0$, where $\B(a,b)$ denotes the Beta function. Hence we have
\begin{eqnarray*}
\lefteqn{\left(\frac{v}{1-v}\right)^{H-\frac{1}{2}}F\left(1,\frac{1}{2}-H,H+\frac{1}{2};v\right)}  \\
&=& \frac{1}{\B\left(1,H-\frac{1}{2}\right)}\left(\frac{v}{1-v}\right)^{H-\frac{1}{2}}\int_0^1 (1-x)^{H-\frac{3}{2}}(1-vx)^{H-\frac{1}{2}}\d x\\
&=&
\frac{1}{\B\left(1,H-\frac{1}{2}\right)}\int_0^1 (1-x)^{H-\frac{3}{2}} \left(\frac{v}{1-v}(1-vx)\right)^{H-\frac{1}{2}}\d x.
\end{eqnarray*}
Now it is straightforward to see that, for any $x\in(0,1)$,
$$
\frac{v}{1-v} - \frac{v^2}{1-v}x
$$
is an increasing function in $v$. Consequently, $k_H(t,s) = k_H(t, t(1-v))$ is also increasing as a function of $v$, and thus $-k_H(t,u)k_H(s,u)$ is increasing in $u$, which shows that, for fixed $t$ and $s$, $\hat r_H(t,s|u)$ is a convex function.
\end{proof}


\begin{proof}[Proof of Proposition \ref{pro:0_asym}]
Denote
$$
\beta_H(\tau)
=
\int_1^{\tau} w^{H-\frac{3}{2}}(w-1)^{H-\frac{1}{2}}\, \d w.
$$
Then, by using the change of variable $w = \frac{z}{v}$ in \eqref{eq:kernel} we can write
$$
k_H(t,v) = d_H\left[\left(\frac{t}{v}\right)^{H-\frac{1}{2}}(t-v)^{H-\frac{1}{2}}
-\left(H-\frac{1}{2}\right)v^{H-\frac{1}{2}}\beta_H\left(\frac{t}{v}\right)\right].  
$$
Then, from \eqref{eq:cond_cov} it follows that
\begin{eqnarray*}
\lefteqn{\hat r_H(t,s|u) - r_H(t,s)}\\
&=& -d_H^2\int_0^u \left[I^H_1(t,s,v) + I^H_2(t,s,v) + I^H_3(t,s,v) + I^H_4(t,s,v)\right] \d v,
\end{eqnarray*}
where
\begin{eqnarray*}
I^H_1(t,s,v) &=& \left(\frac{t}{v}\right)^{H-\frac{1}{2}}\left(\frac{s}{v}\right)^{H-\frac{1}{2}}(t-v)^{H-\frac{1}{2}}(s-v)^{H-\frac{1}{2}}, \\
I^H_2(t,s,v) &=& \left(\frac{1}{2}-H\right)t^{H-\frac{1}{2}}(t-v)^{H-\frac{1}{2}}\beta_H\left(\frac{s}{v}\right), \\
I^H_3(t,s,v) &=& \left(\frac{1}{2}-H\right)s^{H-\frac{1}{2}}(s-v)^{H-\frac{1}{2}} \beta_H\left(\frac{t}{v}\right), \\
I^H_4(t,s,v) &=& \left(H-\frac{1}{2}\right)^2 v^{2H-1} \beta_H\left(\frac{s}{v}\right)\beta_H\left(\frac{t}{v}\right).
\end{eqnarray*}

Consider first the term $I^H_1(t,s,v)$.
Recall that $|a^\gamma - b^\gamma| \le |a-b|^\gamma$ for any $\gamma\in(0,1)$. Consequently, for $H>\frac{1}{2}$,
\begin{equation}
\label{eq:basic}
|(t-v)^{H-\frac{1}{2}} - t^{H-\frac{1}{2}}| \le v^{H-\frac{1}{2}},
\end{equation}
which implies that
\begin{equation}
\label{eq:basic_large_H}
(t-v)^{H-\frac{1}{2}} = t^{H-\frac{1}{2}} + O\left(v^{H-\frac{1}{2}}\right).
\end{equation}
Similarly, for $H<\frac{1}{2}$, 
\begin{eqnarray*}
|(t-v)^{H-\frac{1}{2}} - t^{H-\frac{1}{2}}| &=& \left|\frac{1}{(t-v)^{\frac{1}{2} -H}} - \frac{1}{t^{\frac{1}{2}-H}}\right|\\
&\le& \left|\frac{1}{t-v} - \frac{1}{t}\right|^{\frac{1}{2} -H}\\
&=& \left(\frac{v}{(t-v)t}\right)^{\frac{1}{2} -H}.
\end{eqnarray*}
Consequently, as $v\rightarrow 0$, we have
\begin{equation}
\label{eq:basic_small_H}
(t-v)^{H-\frac{1}{2}} = t^{H-\frac{1}{2}} + O\left(v^{\frac{1}{2}-H}\right).
\end{equation}
Hence, as $v \to 0$,
$$
I^H_1(t,s,v) = (ts)^{2H-1}v^{1-2H} + o(v^{1-2H})
$$
and, consequently,
\begin{equation}
\label{eq:I1_0}
\int_0^u I^H_1(t,s,v)\,\d v = \frac{(ts)^{2H-1}}{2-2H}u^{2-2H} + o(u^{2-2H}).
\end{equation}

Consider then the next three remaining terms $I^H_2(t,s,v)$, $I^H_3(t,s,v)$ and $I^H_4(t,s,v)$. 
We begin with the case $H<\frac{1}{2}$. Now
$\beta_H(\infty)<\infty$ and
\begin{eqnarray*}
\left|\beta_H\left(\frac{t}{v}\right)- \beta_H(\infty)\right| 
&=& \int_{\frac{t}{v}}^\infty w^{H-\frac{3}{2}}(w-1)^{H-\frac{1}{2}}\, \d w\\
&\le& \int_{\frac{t}{v}}^\infty (w-1)^{2H-2}\, \d w\\
&=& C_H \left(\frac{t}{v} - 1\right)^{2H-1}\\
&\le& C_{H,t} v^{1-2H}
\end{eqnarray*}
for small enough $v$. Consequently, 
\begin{equation}
\label{eq:int_asym_small_H}
\beta_H\left(\frac{t}{v}\right) = \beta_H(\infty) + O\left(v^{1-2H}\right).
\end{equation}
By using this together with \eqref{eq:basic_small_H} we get
$$
I^H_2(t,s,v) = \left(\frac{1}{2}-H\right)t^{H-\frac{1}{2}}\beta_H\left(\infty\right) + O(v^{\frac{1}{2}-H})
$$
from which it follows that 
$$
\int_0^u I^H_2(t,s,v) \,\d v = O(u) = o(u^{2H}).
$$
Moreover, with the same arguments we observe
$$
\int_0^u I^H_3(t,s,v)\d v = o(u^{2H})
$$
and by \eqref{eq:I1_0} we also have
$$
\int_0^u I^H_1(t,s,v)\d v = O(u^{2-2H}) = o\left(u^{2H}\right).
$$
Finally, for $I^H_4$ we have, again thanks to \eqref{eq:int_asym_small_H},
$$
I^H_4(t,s,v) = \left(H-\frac{1}{2}\right)^2 \left(\int_1^\infty w^{H-\frac{3}{2}}(w-1)^{H-\frac{1}{2}}\d w\right)^2 v^{2H-1} + O(1)
$$
from which the claim follows by integrating with respect to $v$ over the interval $[0,u]$ for $H<\frac{1}{2}$.
Let then $H>\frac{1}{2}$. We have
\begin{eqnarray}
\nonumber
\beta_H\left(\frac{t}{v}\right) 
&=& \int_1^{\frac{t}{v}} w^{2H-2}\,\d w
+ \int_1^{\frac{t}{v}} w^{H-\frac{3}{2}}\left[(w-1)^{H-\frac{1}{2}} - w^{H-\frac{1}{2}}\right]\,\d w \nonumber\\
\label{eq:I2_0_start}
&=& \frac{t^{2H-1}}{2H-1}v^{1-2H} + O\left(v^{\frac{1}{2}-H}\right),
\end{eqnarray}
where the last equality follows from \eqref{eq:basic}. By using \eqref{eq:basic_large_H} again we hence observe that 
$$
I^H_2(t,s,v) = \left(\frac{1}{2}-H\right)\frac{(ts)^{2H-1}}{2H-1}v^{1-2H} + O(v^{\frac{1}{2}-H}).
$$
Since $O\left(u^{\frac{3}{2}-H}\right) = o\left(u^{2-2H}\right)$ for $H>\frac{1}{2}$, we have
\begin{equation}
\label{eq:I2_0}
\int_0^u I^H_2(t,s,v)\,\d v = \left(\frac{1}{2}-H\right)\frac{(ts)^{2H-1}}{(2H-1)(2-2H)}u^{2-2H} + o(u^{2-2H}).
\end{equation}
Similarly, we observe
\begin{equation}
\label{eq:I3_0}
\int_0^u I^H_3(t,s,v)\,\d v 
= \left(\frac{1}{2}-H\right)\frac{(ts)^{2H-1}}{(2H-1)(2-2H)}u^{2-2H} + o(u^{2-2H}).
\end{equation}
For $I^H_4(t,s,v)$, we obtain by \eqref{eq:I2_0_start} that
$$
I^H_4(t,s,v) = \left(H-\frac{1}{2}\right)^2\frac{(ts)^{2H-1}}{(2H-1)^2}v^{1-2H} + O(v^{\frac{1}{2} -H})
$$
and hence
\begin{equation}
\label{eq:I4_0}
\int_0^u I^H_4(t,s,v)\,\d v 
= \left(H-\frac{1}{2}\right)^2\frac{(ts)^{2H-1}}{(2-2H)(2H-1)^2}u^{2-2H} + o(u^{2-2H}).
\end{equation}
Now the result follows by combining equations \eqref{eq:I2_0}--\eqref{eq:I4_0} with \eqref{eq:I1_0} together with some simplifications. 
\end{proof}


\begin{proof}[Proof of Proposition \ref{pro:s_asym}]
Let $\beta_H$ and $I^H_i(t,s,v)$, $i=1,2,3,4$, be like in the proof of Proposition \ref{pro:0_asym}.

We begin by showing that the terms $I^H_2(t,s,v)$ and $I^H_4(t,s,v)$ are negligible. For this note that 
\begin{equation}
\beta_H\left(\frac{s}{v}\right)
\le C_H\left(\frac{s}{v}-1\right)^{H+\frac{1}{2}}
\le C_H(s-v)^{H+\frac{1}{2}} \label{eq:integral_estimate1}
\end{equation}
for $v$ close enough to $s$. Consequently, for $t>s$ we have
$$
I^H_2(t,s,v) = O\left((s-v)^{H+\frac{1}{2}}\right)
$$
from which it follows that
$$
\int_u^s I^H_2(t,s,v)\,\d v = o\left((s-v)^{H+\frac{1}{2}}\right).
$$
Similarly, for $t=s$ we have
$$
I^H_2(s,s,v) = O\left((s-v)^{2H}\right)
$$
and thus
$$
\int_u^s I^H_2(s,s,v)\,\d v = o\left((s-v)^{2H}\right).
$$
This implies that the term $I^H_2(t,s,v)$ is negligible. For terms $I^H_3(t,s,v)$ and $I^H_4(t,s,v)$, we first observe that
\begin{equation}
\label{eq:decomposition}
\beta_H\left(\frac{t}{v}\right)
= \beta_H\left(\frac{t}{s}\right)
+\int_{\frac{t}{s}}^{\frac{t}{v}} w^{H-\frac{3}{2}}(w-1)^{H-\frac{1}{2}}\,\d w. 
\end{equation}
Here the first term, denoted by $\beta_H\left(\frac{t}{s}\right)$, is just a constant independent of $v$ and $u$. For the second term we have
\begin{equation}
\int_{\frac{t}{s}}^{\frac{t}{v}} w^{H-\frac{3}{2}}(w-1)^{H-\frac{1}{2}}\,\d w 
\le C_{H,t,s} \left(\frac{t}{v}-\frac{t}{s}\right)^{H+\frac{1}{2}}
\le C_{H,t,s}(s-v)^{H+\frac{1}{2}}\label{eq:integral_estimate2} 
\end{equation}
for $v$ close to $s$. Hence the term $I^H_4(t,s,v)$ is also negligible. Indeed, combining estimates \eqref{eq:integral_estimate1} and \eqref{eq:integral_estimate2} we get
$$
\int_u^s I^H_4(t,s,v)\,\d v \le C_{H,t,s} \int_u^s \left[\beta_H\left(\frac{t}{s}\right)(s-v)^{H+\frac{1}{2}} + (s-v)^{2H+1}\right]\, \d v.
$$
Consequently, for $t>s$ we have
$$
\int_u^s I^H_4(t,s,v)\,\d v = o\left((s-u)^{H+\frac{1}{2}}\right)
$$
and for $t=s$, thanks to the fact $\beta_H(1)=0$, we have
$$
\int_u^s I^H_4(s,s,v)\d v = o\left((s-u)^{2H}\right).
$$

Let us next study the term $I^H_3(t,s,v)$. By using the decomposition \eqref{eq:decomposition} and the estimate \eqref{eq:integral_estimate2} we obtain that 
$$
I^H_3(t,s,v) = \left(\frac{1}{2}-H\right)s^{H-\frac{1}{2}}(s-v)^{H-\frac{1}{2}}\beta_H\left(\frac{t}{s}\right) + O((s-v)^{2H}).
$$
Hence for $t>s$ we have 
$$
\int_u^s I^H_3(t,s,v)\,\d v = \frac{\frac{1}{2}-H}{\frac{1}{2} + H}\beta_H\left(\frac{t}{s}\right)s^{H-\frac{1}{2}}(s-u)^{H+\frac{1}{2}} + o\left((s-u)^{H+\frac{1}{2}}\right)
$$
and for $t=s$ we have
$$
\int_u^s I^H_3(s,s,v)\,\d v = o\left((s-u)^{2H}\right).
$$

To conclude the proof, it remains to study the term $I^H_1(t,s,v)$. We write
$$
I^H_1(t,s,v) = \left(\frac{t}{s}\right)^{H-\frac{1}{2}}\left(\frac{s}{v}\right)^{2H-1}(t-v)^{H-\frac{1}{2}}(s-v)^{H-\frac{1}{2}}.
$$
Furthermore, using similar analysis as above we observe that, for $v$ close to $s$, we have
$$
\left(\frac{s}{v}\right)^{2H-1} - 1 \le C_H(s-v)^{2H-1}
$$
for $H>\frac{1}{2}$ and 
$$
\left(\frac{s}{v}\right)^{2H-1} - 1 \le C_H(s-v)^{1-2H}
$$
for $H<\frac{1}{2}$. Thus instead of $I^H_1(t,s,v)$ it suffices to consider 
$$
\left(\frac{t}{s}\right)^{H-\frac{1}{2}}(t-v)^{H-\frac{1}{2}}(s-v)^{H-\frac{1}{2}},
$$
from which we easily observe that, for $t=s$, we have
$$
\int_u^s I^H_1(t,s,v)\,\d v = \frac{1}{2H}\left(\frac{t}{s}\right)^{H-\frac{1}{2}}(s-u)^{2H} + o\left((s-u)^{2H}\right).
$$
For $t>s$ we write
\begin{eqnarray*}
\lefteqn{\int_u^s (t-v)^{H-\frac{1}{2}}(s-v)^{H-\frac{1}{2}}\,\d v} \\
&=& \int_u^s (t-s)^{H-\frac{1}{2}}(s-v)^{H-\frac{1}{2}}\d v\\
& &+\int_u^s \left[(t-v)^{H-\frac{1}{2}}-(t-s)^{H-\frac{1}{2}}\right](s-v)^{H-\frac{1}{2}}\d v\\
&=&\frac{1}{H+\frac{1}{2}}(t-u)^{H-\frac{1}{2}}(s-u)^{H+\frac{1}{2}}\\
& &+\int_u^s \left[(t-v)^{H-\frac{1}{2}}-(t-s)^{H-\frac{1}{2}}\right](s-v)^{H-\frac{1}{2}}\d v.
\end{eqnarray*}
For $H>\frac{1}{2}$ we have
\begin{equation*}
\left|(t-v)^{H-\frac{1}{2}}-(t-s)^{H-\frac{1}{2}}\right| \leq (s-v)^{H-\frac{1}{2}},
\end{equation*}
from which it follows that
$$
\int_u^s \left[(t-v)^{H-\frac{1}{2}}-(t-s)^{H-\frac{1}{2}}\right](s-v)^{H-\frac{1}{2}}\,\d v = O\left((s-u)^{2H}\right) = o\left((s-u)^{H+\frac{1}{2}}\right).
$$
Similarly, for $H<\frac{1}{2}$ we have
\begin{equation*}
\left|(t-v)^{H-\frac{1}{2}}-(t-s)^{H-\frac{1}{2}}\right| 
\le \left(\frac{v-s}{(t-v)(t-s)}\right)^{H-\frac{1}{2}}
\le (t-s)^{2H-1}(s-v)^{\frac{1}{2} -H}.
\end{equation*}
Hence 
$$
\int_u^s \left[(t-v)^{H-\frac{1}{2}}-(t-u)^{H-\frac{1}{2}}\right](s-v)^{H-\frac{1}{2}}\,\d v 
=
O\left(s-u\right)
= 
o\left((s-u)^{H+\frac{1}{2}}\right).
$$
Combining the above estimates we thus observed that, in the case $t>s$, 
$$
\int_u^s I^H_1(t,s,v)\,\d v = \frac{1}{H+\frac{1}{2}}\left(\frac{t}{s}\right)^{H-\frac{1}{2}}(t-s)^{H-\frac{1}{2}}(s-u)^{H+\frac{1}{2}} + o\left((s-u)^{H+\frac{1}{2}}\right).
$$
\end{proof}

\bibliographystyle{siam}
\bibliography{pipliateekki}
\end{document}